\newtheorem{remark}{Remark}[section] 
\newtheorem{application}{Application}
\newtheorem{proposition}{Proposition}[section]
\newtheorem{corollary}{Corollary}[section]
\newcommand{\E}{\mathbb{E}}
\newcommand{\Prob}{\mathbb{P}}
\newenvironment{proof}{\begin{trivlist}
\item[\hspace{\labelsep}{\bf\noindent Proof. }]}
{\end{trivlist}}
\title{\huge\bf Compound Poisson process \\ with a Poisson subordinator}
\author{
{\normalsize
\bf Antonio Di Crescenzo$^{(1)}$, \ Barbara Martinucci$^{(1)}$, \ Shelemyahu Zacks$^{(2)}$} \\
\normalsize
(1) Dipartimento di Matematica, 
Universit\`a degli Studi di Salerno \\
\normalsize
84084 Fisciano (SA),
Italy \\
\normalsize
email: \{adicrescenzo,\,bmartinucci\}@unisa.it \\
\normalsize
(2) Department of Mathematical Sciences, 
Binghamton University  \\
\normalsize
Binghamton, NY 13902-6000,
USA \\
\normalsize
email: shelly@math.binghamton.edu
}
\date{\bf First published in  {\em Journal of Applied Probability}, Vol.\ 52, No.\ 2, p.\ 360-374  \\
\copyright\ 2015 by the 
 Applied Probability Trust}
\begin{document}
\maketitle

\begin{abstract}
A compound Poisson process whose randomized time is an independent Poisson process is called 
compound Poisson process with Poisson subordinator. We provide its probability distribution, which is 
expressed in terms of the Bell polynomials, and investigate in detail both the special cases in which 
the compound Poisson process has exponential jumps and normal jumps. Then for the iterated 
Poisson process we discuss some properties and provide convergence results to a Poisson process. 
The first-crossing-time problem for the iterated Poisson process is finally tackled in the cases of 
(i) a decreasing and constant boundary, where we provide some closed-form results, and (ii) a linearly 
increasing boundary, where we propose an iterative procedure to compute the first-crossing-time 
density and survival functions. 


\smallskip\noindent
{\em Keywords:\/} Bell polynomials; first-crossing time; iterated process;  
linear boundary; mean sojourn time; Poisson process.  

\smallskip\noindent
2010 Mathematics Subject Classification: 
Primary 60J27; 
60G40 
\end{abstract}
\section{Introduction}
Stochastic processes evaluated at random times are receiving increasing attention in various applied 
fields. There are many examples of processes with random times that could be modeled in this manner:
\begin{description}
\item{\em (i)} in reliability theory, the life span of items subject to accelerated conditions, 
\item{\em (ii)} in econometrics, the composition of the price of a security and the effective economic time, 
\item{\em (iii)} in queueing theory, the number of customers joining the system during specific service periods, 
\item{\em (iv)} in statistics, for the random sampling of stochastic processes. 
\end{description}
One of the first papers in this field is  Lee and Whitmore \cite{LeeWhi93}, who studied general properties 
of processes directed by randomized times. In the literature special attention is given to the case of a Poisson 
process with randomized time. Another earlier example of a stochastic process with a Poisson subordinator is 
given in Pickands \cite{Pi71}. Cox processes are also Poisson processes with randomized times. 
These are also called doubly stochastic Poisson processes 
(see  \cite{Se72} and  \cite{Gr76}). Kumar  {\em et al.}\ \cite {KNV2011} considered 
time-changed Poisson processes where the subordinator is an inverse Gaussian process, i.e.\ the 
hitting time process of the standard Brownian motion. There are fractional Poisson processes, which 
are processes with randomized continuous fractional diffusions (see \cite{BeOr2009}, 
\cite{BeOr2010}, and \cite{MaGoSc04}). There are examples where the random 
times are Brownian motions (see \cite{BeOr2012}). For other types of iterated Poisson 
process; see, e.g.\ \cite{HoSt99}. 
\par
Let us consider a compound Poisson process (CPP), 
$$
 Y(t)=\sum_{n=1}^{M(t)} X_n, \qquad t>0
$$
with $M(t)$ a Poisson process, and $X_n$ a sequence of independent and identically distributed 
(i.i.d.)\ random variables independent of $M(t)$. 
In this present paper we investigate the distribution of the process $Z(t)=Y[N(t)]$, where $N(t)$
is an independent Poisson process. Such a process is called a CPP with a Poisson subordinator. 
We provide explicit equations of this distribution, its moments and other characteristics. Precisely, 
after some preliminary results concerning the distribution of the CPP $Y(t)$ given in Section 2, 
in Section 3 we obtain the probability distribution of the process $Z(t)$ in terms of the Bell polynomials. 
Then in Sections 4 and 5, we develop the equations of the distribution of $Z(t)$ in the special cases 
when $Y(t)$ has exponential jumps and normal jumps, respectively.
\par
In order to demonstrate the usefulness of the CPP with a Poisson subordinator we now provide two examples 
of application of in biomedical research and ecology. 
\begin{application}\label{app:bio}{
Assume that patients having the same disease arrive at a clinic at random times, according to the Poisson 
process $N(t)$. Each patient receives the same treatment and is under inspection for one time unit (a year). 
The symptoms of the disease may reoccur at random times during the inspection period, 
according to the independent Poisson process $M(t)$. The total number of occurrences and their severity 
is distributed for each patient as the random variable (RV) $Y(1)$. Thus, the process $Z[N(t)]$  
represents the total damage inflicted by the disease to patients after their inspection period.
}\end{application}
\begin{application}\label{app:eco}{ 
Let $N(t)$ be a Poisson process describing the number of animals caught in $[0,t]$ during an investigation 
in ecology. A radio transmitter is attached to each animal for a period of one time unit. A transmitter sends 
information at random, according to an independent Poisson process $M(t)$. The total number of 
occurrences and the amount of information is distributed for each animal as the RV $Y(1)$. Thus, the 
process $Z[N(t)]$  represents the total amount of information transmitted by the animals after the 
investigation period. 
}\end{application}
\par
The second part of the paper is devoted to a special case, namely the iterated Poisson process, which  
is a Poisson process whose randomized time is another independent Poisson process. This is actually 
a CPP whose independent jumps are discrete RVs having a Poisson 
distribution. An example in queueing theory involving the iterated Poisson process is 
provided in Application \ref{app:1}. We remark that 
the probability law, the governing equations, its representation as a random sum, and 
various generalizations of the iterated Poisson process have been studied in   
\cite{OrPo2010} and \cite{OrPo2012}. For such a process, in Section 6 we express in series form the 
mean sojourn time in a fixed state. Moreover, we also find conditions under which it converges to a 
regular Poisson process. Stopping time problems for the iterated Poisson process are finally 
studied in Section 7. In the case of constant boundaries we obtain the first-crossing-time density 
and the mean first-crossing time in closed form. For linear increasing boundaries we develop a 
computationally effective procedure able to determine iteratively the first-hitting-time density and, 
in turn, the corresponding survival function. 
%
\section{Preliminaries}
In the present section we bring some well-known results about the distribution of the CPP, $Y(t)$, 
and Bell polynomials, which are used later in the distribution of $Z(t)=Y[N(t)]$. 
\par
Consider the compound Poisson process $\{Y(t),t\geq 0\}$ defined as 
\begin{equation}
 Y(t)=\sum_{n=1}^{M(t)} X_n,
 \qquad t>0,
 \label{eq:defYt}
\end{equation}
where $\{M(t),t\geq 0\}$ is a Poisson process with intensity $\mu$, and $\{X_n, n\geq 1\}$ 
is a sequence of i.i.d.\ RVs independent of $\{M(t),t\geq 0\}$. 
The probability mass  function of $M(t)$ at $m$ will be denoted as 
\begin{equation}
 p(m;\mu t)=\Prob\{M(t)=m\}={\rm e}^{-\mu t} \frac{(\mu t)^m}{m!}, 
 \qquad m=0,1,\ldots.
 \label{eq:defpMt}
\end{equation}
We assume that $Y(0)=0$ and $M(0)=0$. Let $H_Y(y;t)$ denote the cumulative distribution 
function (CDF) of $Y(t)$, $y\in {\mathbb R}$ and $t>0$. 
From (\ref{eq:defYt}) it follows that if $M(t)=0$ then $Y(t)=0$ so that the distribution of  $Y(t)$ 
has an atom at $0$ with $\Prob\{Y(t)=0\}\geq p(0;\mu t)={\rm e}^{-\mu t}$, $t\geq 0$, 
where $\Prob$ denotes the probability measure. Moreover, recalling (\ref{eq:defpMt}) 
it follows that the CDF of $Y(t)$ is given by the following Poisson mixture:
\begin{equation}
\begin{split}
 H_Y(y;t)&=
 \sum_{m=0}^{+\infty} 
 p(m;\mu t)\,F^{(m)}_X(y), \\
 &={\bf 1}_{\{y\geq 0\}} {\rm e}^{-\mu t}
 +\sum_{m=1}^{+\infty} 
 p(m;\mu t)\,F^{(m)}_X(y),
 \qquad y\in{\mathbb R}, \;\; t\geq 0. 
 \end{split}
 \label{eq:esprHyt}
\end{equation}
Note that in  (\ref{eq:esprHyt}) and  throughout the paper, $g^{(m)}$ denotes the $m$-fold convolution 
of a given function $g$. The indicator function is denoted by ${\bf 1}_{\{\cdot \}}$. 
\par
If the RVs $X_i$ are absolutely continuous with probability density function $f_X(\cdot)$ then 
due to (\ref{eq:esprHyt}), 
\begin{description}
\item{(i)} the absolutely continuous component of the probability law of $Y(t)$ is expressed by the density
$$
 h_Y(y;t)=\sum_{m=1}^{+\infty} p(m;\mu t)\,f^{(m)}_X(y),
 \qquad y\neq 0, \;\; t> 0,
$$ 
\item{(ii)}  the discrete component is given by $\Prob\{Y(t)=0\}=p(0;\mu t)={\rm e}^{-\mu t}$, $t\geq 0$. 
\end{description}
\par
Let $M_X(s):=\E\left\{{\rm e}^{s X}\right\}$ be the moment generating function of $X_i$, where $s$ 
is within the region of convergence of  $M_X(s)$ and $\E$ is the expectation. 
From (\ref{eq:defYt}) it follows that the moment generating function of $Y(t)$ is   
\begin{equation}
 \E\left\{{\rm e}^{s Y(t)}\right\}
 =\exp\left\{-\mu t [1-M_X(s)]\right\},
 \qquad  t\geq 0.
 \label{eq:mgfYt}
\end{equation}
\par
In the following section we study the process $Y(t)$ when the time is randomized 
according to a Poisson process. Precisely, we consider the stochastic process $\{Y[N(t)], t\geq 0\}$, 
where  $\{N(t),t\geq 0\}$ is a Poisson process with intensity $\lambda$, independent of $Y(t)$. 
We will express the c.d.f.\ of $Z(t)$ in terms of Bell polynomials
\begin{equation}
 B_n(x)=\sum_{k=0}^{\infty} k^n \, \frac{x^k}{k!}\,{\rm e}^{-x}, \qquad n\geq 0.
 \label{eq:Bnxserie}
\end{equation}
We therefore recall some properties of these polynomials (see, e.g.\  \cite{Com74}). 
Let $B_n(x)$, $x\geq 0$, represent the $n$th moment of a Poisson distribution with mean $x$. 
Obviously, $B_0(x)=1$, $B_1(x)=x$, and $B_2(x)=x+x^2$. Generally, $B_n(x)$ is a polynomial 
of degree $n$ of the form
$$
 B_n(x)=x+a_2 x^2+\ldots+a_{n-1} x^{n-1}+x^n,\qquad n\geq 2.
$$
We can determine $B_n(x)$ by differentiating the moment generating function 
$\exp\big\{-x(1-{\rm e}^{\theta})\big\}$ with respect to $\theta$, $n$ times, and evaluating the 
derivative at $\theta=0$. From (\ref{eq:Bnxserie}) we immediately obtain the derivative
\begin{equation}
B'_n(x)=-B_n(x)+\frac{B_{n+1}(x)}{x},\qquad n\geq 0.
\label{bellderivata}
\end{equation}
Hence, due to (\ref{bellderivata})  the following recursive formula holds: 
\begin{equation}
B_{n+1}(x)=x[B'_n(x)+B_{n}(x)],\qquad n\geq 0,
\label{bellrecursive}
\end{equation}
which is useful in obtaining the explicit form of the Bell polynomials. For instance, 
according to (\ref{bellrecursive}), we have  
\begin{eqnarray*}
&& B_3(x)=x+3 x^2+x^3,\\
&& B_4(x)=x+7 x^2+6x^3 +x^4,\\
&& B_5(x)=x+15 x^2+25 x^3+10 x^4+x^5,
\end{eqnarray*}
etc.  
Moreover, we can also express these polynomials by the Dobi\'nski formula  
\begin{equation}
 B_n(x)=\sum_{k=0}^{n} S_2(n,k)\,x^k 
 \qquad \hbox{for }n=0,1,\ldots,
 \label{eq:dobinski}
\end{equation}
where 
$$
 S_2(n,k)=\left\{ 
 {n \atop k}
 \right\}
 ={1\over k!}\sum_{i=0}^{k}(-1)^i {k\choose i} (k-i)^n 
 \qquad \hbox{for }k=0,1,\ldots,n,
$$
are the Stirling numbers of the second kind. 
\section{The distribution of $Z(t)=Y[N(t)]$}
First of all we point out that, due to (\ref{eq:defYt}), the process $\{Y[N(t)],t\geq 0\}$ is identically  
distributed to the CPP 
\begin{equation}
 Z(t):=\sum_{n=0}^{N(t)} W_n,
 \qquad t\geq 0,
 \label{eq:defZt}
\end{equation}
where $W_0= 0$ almost surely (a.s.), and $W_1,W_2,\ldots$ are i.i.d.\ RVs distributed as 
$$
 Y(1)=\sum_{n=1}^{M(1)}X_n.
$$ 
\par
Note that the CDF of $W_n$, $n\geq 1$, is $H_Y(y;1)$. The $n$-fold convolution of this 
CDF is $H_Y(y;n)$, $n\geq 1$. We denote by $H_Z(z;t)$ the CDF of $Z(t)$. 
\begin{proposition}\label{proposition:HZ}
For all $z\in\mathbb{R}$ and $t>0$ the CDF of process (\ref{eq:defZt}) is 
\begin{equation}
 H_Z(z;t)=\exp\left\{-\lambda t(1-{\rm e}^{-\mu})\right\}
 \left[ {\bf 1}_{\{z\geq 0\}}
 +\sum_{n=1}^{+\infty} 
 {\mu^n\over n!}\,B_n\left(\lambda t {\rm e}^{-\mu}\right) F^{(n)}_X(z)\right].
 \label{eq:esprHzt}
\end{equation}
\end{proposition}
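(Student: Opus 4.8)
The plan is to condition on the value of the Poisson subordinator $N(t)$ and exploit the random-sum representation (\ref{eq:defZt}). First I would observe that, given $N(t)=k$, the process reduces to $Z(t)=W_0+W_1+\cdots+W_k=W_1+\cdots+W_k$, a sum of $k$ independent copies of $Y(1)$; hence its conditional CDF is the $k$-fold convolution $H_Y(z;k)$, with the convention that the $0$-fold convolution is the point mass at the origin, contributing ${\bf 1}_{\{z\geq 0\}}$ when $k=0$. Writing $p(k;\lambda t)$ for the mass function of $N(t)$ from (\ref{eq:defpMt}), the total-probability formula then yields the Poisson mixture
\begin{equation*}
 H_Z(z;t)=\sum_{k=0}^{+\infty} p(k;\lambda t)\,H_Y(z;k),
 \qquad z\in\mathbb{R},\;\; t>0.
\end{equation*}

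Next I would substitute the explicit expression (\ref{eq:esprHyt}) for $H_Y(z;k)$ and separate the atom at $0$ from the remaining terms. The contribution of $m=0$ is
\begin{equation*}
 {\bf 1}_{\{z\geq 0\}}\,{\rm e}^{-\lambda t}\sum_{k=0}^{+\infty}\frac{(\lambda t\,{\rm e}^{-\mu})^k}{k!}
 ={\bf 1}_{\{z\geq 0\}}\,{\rm e}^{-\lambda t(1-{\rm e}^{-\mu})},
\end{equation*}
which already matches the indicator term in (\ref{eq:esprHzt}). For the remaining double sum I would interchange the order of summation over $k$ and $m$ --- legitimate since every summand is nonnegative for fixed $z$ (Tonelli) --- and collect the $k$-dependent factors into the inner series
\begin{equation*}
 \sum_{k=0}^{+\infty} k^m\,\frac{(\lambda t\,{\rm e}^{-\mu})^k}{k!}.
\end{equation*}

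The crucial step is to recognize this inner series, via the defining series (\ref{eq:Bnxserie}) of the Bell polynomials, as ${\rm e}^{\lambda t\,{\rm e}^{-\mu}}\,B_m(\lambda t\,{\rm e}^{-\mu})$; this is precisely the point at which the Bell polynomials enter the distribution. Combining this with the surviving prefactor ${\rm e}^{-\lambda t}\mu^m/m!$ and the common exponential ${\rm e}^{\lambda t\,{\rm e}^{-\mu}}$ regenerates the constant ${\rm e}^{-\lambda t(1-{\rm e}^{-\mu})}$ and leaves the sum $\sum_{m\geq 1}(\mu^m/m!)\,B_m(\lambda t\,{\rm e}^{-\mu})\,F_X^{(m)}(z)$, completing the identification with (\ref{eq:esprHzt}) after relabelling $m$ as $n$. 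I expect the only genuine subtlety to be the justification of the summation interchange together with the correct identification of the inner $k$-series as a Bell polynomial; once those are in place, the rest is bookkeeping of the Poisson normalizing constants.
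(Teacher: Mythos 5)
Your proposal is correct and follows essentially the same route as the paper's proof: conditioning on $N(t)$ to get the Poisson mixture $H_Z(z;t)=\sum_{k\geq 0}p(k;\lambda t)H_Y(z;k)$, splitting off the atom, interchanging the two sums by nonnegativity, and identifying the inner series $\sum_{k}k^m(\lambda t\,{\rm e}^{-\mu})^k/k!$ with the Bell polynomial via (\ref{eq:Bnxserie}). The only differences are cosmetic (you make the Tonelli justification and the $0$-fold convolution convention explicit, which the paper leaves implicit).
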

\begin{proof}
From  (\ref{eq:defZt}) and  (\ref{eq:esprHyt})  we have 
\begin{equation}
\begin{split}
 H_Z(z;t) &= \sum_{n=0}^{+\infty} p(n;\lambda t)\,H_Y(z;n) \\
 &=\sum_{n=0}^{+\infty} p(n;\lambda t) 
 \left[
 {\bf 1}_{\{z\geq 0\}}{\rm e}^{-\mu n}
 +\sum_{m=1}^{+\infty} p(m;\mu n)\,F^{(m)}_X(z)
 \right],
 \qquad z\in\mathbb{R}, \;\; t>0.
\end{split}
\nonumber
\end{equation}
In addition, 
$$
 \sum_{n=0}^{\infty} p(n; \lambda t) {\rm e}^{-\mu n}={\rm e}^{-\lambda t (1-{\rm e}^{-\mu})};
$$
also, since all terms are non-negative, and recalling (\ref{eq:Bnxserie}), 
\begin{align*}
 \sum_{n=0}^{\infty} p(n; \lambda t)\sum_{m=1}^{\infty} p(m; \mu n) F_X^{(m)}(z)
 & =\sum_{m=1}^{\infty} \frac{\mu^m}{m!} F_X^{(m)}(z)\sum_{n=0}^{\infty} n^m {\rm e}^{-\lambda t} 
\frac{(\lambda t {\rm e}^{-\mu})^n}{n!}
\\
& ={\rm e}^{-\lambda t (1-{\rm e}^{-\mu})} \sum_{m=1}^{\infty} \frac{\mu^m}{m!} B_m(\lambda t {\rm e}^{-\mu})F_X^{(m)}(z).
\end{align*}
This proves (\ref{eq:esprHzt}). 
\end{proof}
\begin{corollary}
If $F_X$ is absolutely continuous with density $f_X$ then the density of (\ref{eq:esprHzt}) for any 
$z\neq 0$ and $t>0$ is 
\begin{equation}
 h_Z(z;t)=\exp\left\{-\lambda t(1-{\rm e}^{-\mu})\right\}
 \sum_{n=1}^{+\infty} {\mu^n\over n!}\,B_n\left(\lambda t {\rm e}^{-\mu}\right)f^{(n)}_X(z). 
 \label{eq:denshZt}
\end{equation}
\end{corollary}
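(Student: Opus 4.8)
The plan is to obtain the density by differentiating the CDF (\ref{eq:esprHzt}) of Proposition \ref{proposition:HZ} with respect to $z$. First I would separate the atom at the origin from the absolutely continuous part: for $z\neq 0$ the term ${\bf 1}_{\{z\geq 0\}}$ is locally constant, so its $z$-derivative vanishes and it contributes only the discrete mass $\exp\{-\lambda t(1-{\rm e}^{-\mu})\}$ at $z=0$. What remains is to differentiate the series $\sum_{n\geq 1}\frac{\mu^n}{n!}B_n(\lambda t{\rm e}^{-\mu})F_X^{(n)}(z)$ term by term, using the elementary fact that the $n$-fold convolution of an absolutely continuous CDF is again absolutely continuous with $\frac{d}{dz}F_X^{(n)}(z)=f_X^{(n)}(z)$. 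Performed formally, this yields exactly (\ref{eq:denshZt}).

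The only genuine issue is the interchange of differentiation and the infinite summation. Rather than differentiate directly, I would verify the claimed identity in integrated form: define $h_Z(z;t)$ by the right-hand side of (\ref{eq:denshZt}) and show that $\int_{-\infty}^z h_Z(u;t)\,du$ reproduces the absolutely continuous part of $H_Z(z;t)$. Since each summand $\frac{\mu^n}{n!}B_n(\lambda t{\rm e}^{-\mu})f_X^{(n)}(u)$ is nonnegative, Tonelli's theorem permits exchanging the integral with the sum, giving $\sum_{n\geq 1}\frac{\mu^n}{n!}B_n(\lambda t{\rm e}^{-\mu})\int_{-\infty}^z f_X^{(n)}(u)\,du=\sum_{n\geq 1}\frac{\mu^n}{n!}B_n(\lambda t{\rm e}^{-\mu})F_X^{(n)}(z)$, which is precisely the series in (\ref{eq:esprHzt}) after factoring out the common exponential. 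This confirms that the proposed $h_Z$ is the density of the absolutely continuous component.

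As a sanity check one verifies that the total mass is correctly split. Using the generating-function identity $\sum_{n\geq 0}\frac{s^n}{n!}B_n(x)=\exp\{x({\rm e}^{s}-1)\}$ with $s=\mu$ and $x=\lambda t{\rm e}^{-\mu}$ gives $\sum_{n\geq 1}\frac{\mu^n}{n!}B_n(\lambda t{\rm e}^{-\mu})=\exp\{\lambda t(1-{\rm e}^{-\mu})\}-1$, so the absolutely continuous component carries total probability $\exp\{-\lambda t(1-{\rm e}^{-\mu})\}(\exp\{\lambda t(1-{\rm e}^{-\mu})\}-1)=1-\exp\{-\lambda t(1-{\rm e}^{-\mu})\}$, which together with the atom $\exp\{-\lambda t(1-{\rm e}^{-\mu})\}$ at the origin sums to one. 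The main obstacle is therefore not the computation but the justification of the termwise operation; phrasing it through Tonelli's theorem on the nonnegative series avoids any delicate uniform-convergence estimate on the convolved densities $f_X^{(n)}$, which need not be uniformly bounded.
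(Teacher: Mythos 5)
Your proposal is correct and matches the argument the paper intends: the corollary is stated there without proof, as the immediate termwise differentiation of (\ref{eq:esprHzt}) after discarding the atom $\exp\{-\lambda t(1-{\rm e}^{-\mu})\}\,{\bf 1}_{\{z\geq 0\}}$. Your integrated-form verification via Tonelli supplies precisely the justification the paper leaves implicit (and also shows the series defining $h_Z$ is finite for almost every $z$), and your mass-balance check using $\sum_{n\geq 0}\frac{\mu^n}{n!}B_n\left(\lambda t{\rm e}^{-\mu}\right)=\exp\left\{\lambda t\left(1-{\rm e}^{-\mu}\right)\right\}$ is a correct application of the Bell polynomial generating function.
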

\begin{remark}\rm 
Note that if $F_X$ is continuous at $0$ then $H_Z(z;t)$ has a jump of size 
${\rm e}^{-\lambda t(1-{\rm e}^{-\mu})}$ at $z=0$. 
\end{remark}
\begin{remark}\rm 
We can easily prove that $H_Z(z;t)$ given by (\ref{eq:esprHzt}) is indeed a CDF.
\end{remark}
\par
Recalling that $W_i$ is distributed as $Y(1)$, by setting $t=1$ in (\ref{eq:mgfYt})  
we obtain the following moment generating function: 
$$
 M_W(s):=\E\left\{{\rm e}^{s W_i}\right\}=\exp\{-\mu[1-M_X(s)]\}, \qquad i=1,2,\ldots.
$$
Hence, due to (\ref{eq:denshZt}) the Laplace-Stieltjes transform of $Z(t)$ is 
$$
 \E\left\{ {\rm e}^{-\theta Z(t)}\right\}
 =\exp\left\{-\lambda t[1-M_W(-\theta)]\right\}
 =\exp\left\{- t \, \Psi(\theta) \right\}, 
 \qquad t>0,
$$
where 
$$
 \Psi(\theta)=\lambda \left[1-{\rm e}^{-\mu [1-M_X(-\theta)]}\right].
$$
We recall that $\{Z(t),t\geq 0\}$ is a CPP time-changed by a Poisson process, 
and thus it is a L\'evy process (see, e.g.\ [1, Theorem 1.3.25]). 
\par
Let us now focus on the mean and the variance of $Z(t)$. From  (\ref{eq:defZt}) we have 
$$
 \E\{Z(t)\}=\lambda \mu \xi t, \qquad 
 {\rm var}[Z(t)]=\lambda \mu [ \sigma^2 +(\mu+1)\xi^2] t,
 \qquad t\geq 0,
$$
where we have set 
$$
 \xi=\E\{X_1\}\quad \hbox{and} \quad \sigma^2={\mathbb V}ar(X_1).
$$ 
Finally, by the strong law of large numbers,  the following asymptotic result holds for process $Z(t)$:
$$
 \lim_{t\to +\infty}{Z(t)\over t}=\lambda \mu \xi\qquad {\rm a.s.}
$$ 
\par
In the following sections we consider three special cases of interest, in which the distributions of the 
jump variables $X_i$ are assumed to be deterministic, exponential, and normal.  
%
\section{The compound Poisson process with exponential jumps}
In this section we consider the CPP (\ref{eq:defYt}) when $\{X_n, n\geq 1\}$ are i.i.d.\ 
exponentially distributed RVs with parameter $\zeta$, i.e.\ 
$f_X(x)=\zeta {\rm e}^{-\zeta x}\,{\bf 1}_{\{x\geq 0\}}$. In this case, we have for $x\geq 0$, 
$$
 F_X(x)=1-p(0;\zeta x), \qquad F_X^{(m)}(x)=1-P(m-1;\zeta x),
 \qquad m=1,2\ldots,
$$
where $p(n;\lambda)$ is defined in (\ref{eq:defpMt}) and $P(n;\lambda)$ is the CDF given by 
$$
 P(n;\lambda)=\sum_{i=0}^{n}p(i;\lambda). 
$$ 
Hence, due to (\ref{eq:esprHyt}) the CDF of $Y(t)$, $t\geq 0$, is  
$$
 H_Y(y;t)= {\rm e}^{-\mu t}+ \sum_{m=1}^{+\infty} p(m;\mu t)\,[1-P(m-1;\zeta y)],
 \qquad y\geq 0,
$$
Thus, the atom of such a function is $H_Y(0;t)={\rm e}^{-\mu t}$. 
Generally, for $n=1,2,\ldots$, 
$$
 H_Y^{(n)}(y;1)=H_Y(y;n)
 =1- \sum_{m=1}^{+\infty} p(m;n\mu)\,P(m-1;\zeta y),
 \qquad y\geq 0.
$$
According to Proposition \ref{proposition:HZ}, the CDF of $Z(t)$ can be 
expressed as   
\begin{eqnarray}
 && \hspace*{-1.6cm}
 H_Z(z;t)= \sum_{n=0}^{+\infty} p(n;\lambda t)
 \left[1- \sum_{m=1}^{+\infty} p(m;n\mu)P(m-1;\zeta z)\right]
 \nonumber  \\
 && \hspace*{-0.1cm}
 = 1-\sum_{n=0}^{+\infty} p(n;\lambda t) \sum_{m=1}^{+\infty} p(m;n\mu)P(m-1;\zeta z)
 \nonumber  \\
 && \hspace*{-0.1cm}
 =1-\sum_{m=1}^{+\infty} p_m(t)P(m-1;\zeta z),
 \label{eq:HZtesp}
\end{eqnarray}
since $p_m(t)=\sum_{n=0}^{+\infty} p(n;\lambda t) p(m;n\mu)$ due to (\ref{equation:pnt}). 
Additional manipulations yield the alternative equation 
$$
  H_Z(z;t) = \sum_{j=0}^{+\infty} p(j;\zeta z)  \sum_{m=0}^{j} p_m(t).
$$
\par
We pinpoint that the atom at 0 of (\ref{eq:HZtesp}) is $H_Z(0;t) = p_0(t)$. 
Since $({\rm d}/{\rm d}z)P(m-1;\zeta z)=-\zeta  p(m-1; \zeta z)$, 
the density of $H_Z(z;t)$ for $z>0$ and $t>0$, is expressed as 
\begin{equation}
 h_Z(z;t)=\zeta \sum_{m=1}^{+\infty}   p_m(t) p(m-1;\zeta z). 
 \label{eq:denshZtesp}
\end{equation}
Some plots of such density are presented in Figure \ref{eq:denshZesp}, where the probability 
mass $\int_0^{+\infty}h_Z(z;t)\,{\rm d}z$ is shown in the caption for some choices of $t$.
%
\begin{figure}[t]
\ (a) \hspace{7cm} \ (b)\\
\centerline{
\epsfxsize=7.5cm
\epsfbox{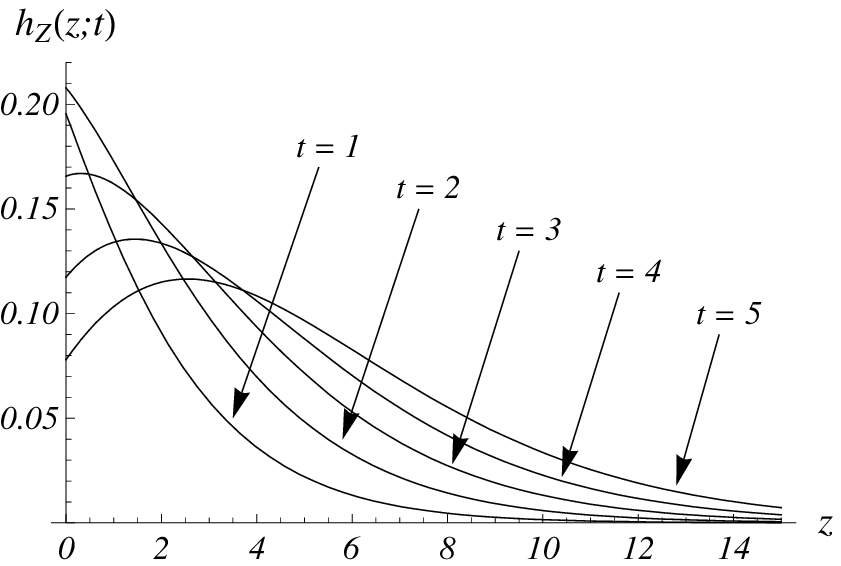}
$\;$
\epsfxsize=7.5cm
\epsfbox{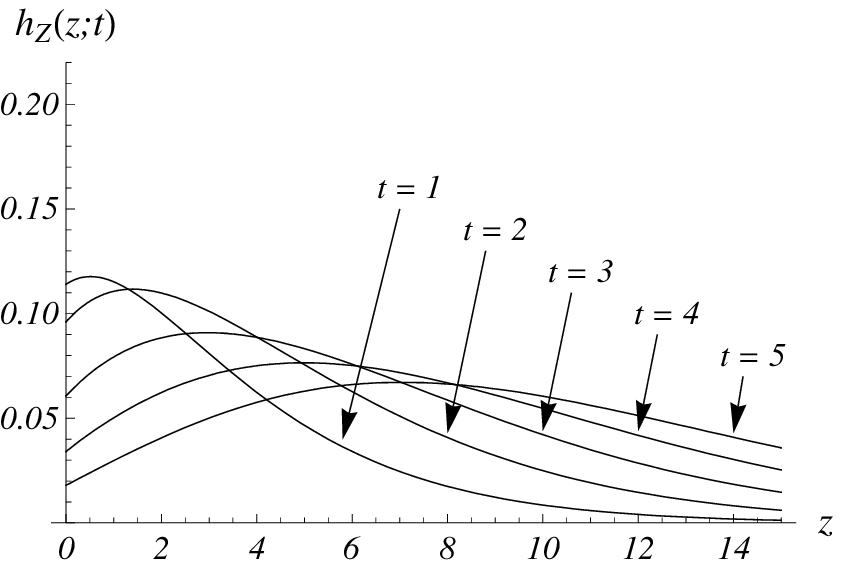}
}
\caption{\small Density (\ref{eq:denshZtesp}) for  $\mu=1$, $\zeta=1$ and various choices of $t$, 
with (a) $\lambda=1$, and (b) $\lambda=2$. 
The corresponding probability mass of $h_Z(z;t)$ is 0.4685,  0.7175, 0.8499, 0.9202, 0.9576 
in (a) and 0.7175, 0.9202, 0.9775, 0.9936, 0.9982 in (b) for increasing values of $t$.
}
\label{eq:denshZesp}
\end{figure}
\par
Finally, we note that the expected value of $Z(t)$ is $\E\{Z(t)\}=\lambda\mu t/\zeta$.   
\section{The compound Poisson process with normal jumps}
Let us now investigate the CPP (\ref{eq:defYt}) when $\{X_n, n\geq 1\}$ are 
i.i.d.\ normally distributed RVs with mean $\eta\in\mathbb{R}$ and variance $\sigma^2>0$, so     
$$
 F_X(x)=\Phi\Big(\frac{x-\eta}{\sigma}\Big), 
 \qquad 
 F_X^{(n)}(x)=\Phi \Big(\frac{x-n\eta}{\sigma\sqrt{n}}\Big),
 \qquad x\in{\mathbb R},
$$
where $\Phi(\cdot)$ is the standard normal distribution function. 
Hence, for all $y\in {\mathbb R}$, 
$$
 H_Y^{(n)}(y;1)= H_Y(y;n)
 ={\rm e}^{-n\mu} {\bf 1}_{\{y\geq 0\}}
 + \sum_{m=1}^{+\infty} p(m;n\mu)\Phi \Big(\frac{y-m\eta}{\sigma\sqrt{m}}\Big),
 \qquad  t\geq 0.
$$
Note that the above distribution has support on $(-\infty,+\infty)$, and it is absolutely continuous on 
all $y\neq 0$, with a jump (atom) at $y=0$. According to (\ref{eq:esprHzt}), the CDF of $Z(t)$ is 
\begin{equation}
 H_Z(z;t) = p_0(t){\bf 1}_{\{z\geq 0\}}
 +\sum_{n=1}^{+\infty} p_n(t) \Phi \Big(\frac{z-n\eta}{\sigma\sqrt{n}}\Big),
 \qquad z\in\mathbb{R}, \;\; t\geq 0,
\label{eq:hznormal}
\end{equation}
where $p_n(t)$  is given in (\ref{equation:pnt}). In Figure \ref{fig:3} we show some plots of $H_Z(z;t)$. 
In this case the L\'evy exponent of $\{Z(t), t\geq 0\}$ is 
$$
 \Psi(\theta)=\lambda \left[1-\exp\left\{-\mu\left(1-{\rm e}^{-\eta \theta+\sigma^2\theta^2/2}\right)\right\}\right].
$$
In conclusion, from (\ref{eq:hznormal}) it follows that the density of $Z(t)$, at $z\neq 0$ is 
$$
 h_Z(z;t)=\frac{1}{\sigma} \sum_{n=1}^{+\infty} p_n(t)\frac{1}{\sqrt{n}}
 \phi \Big(\frac{z-n\eta}{\sigma\sqrt{n}}\Big),
$$
where $\phi(\cdot)$ is the standard normal density. 
\begin{figure}[t]
\ (a) \hspace{7cm} \ (b)\\
\centerline{
\epsfxsize=7.5cm
\epsfbox{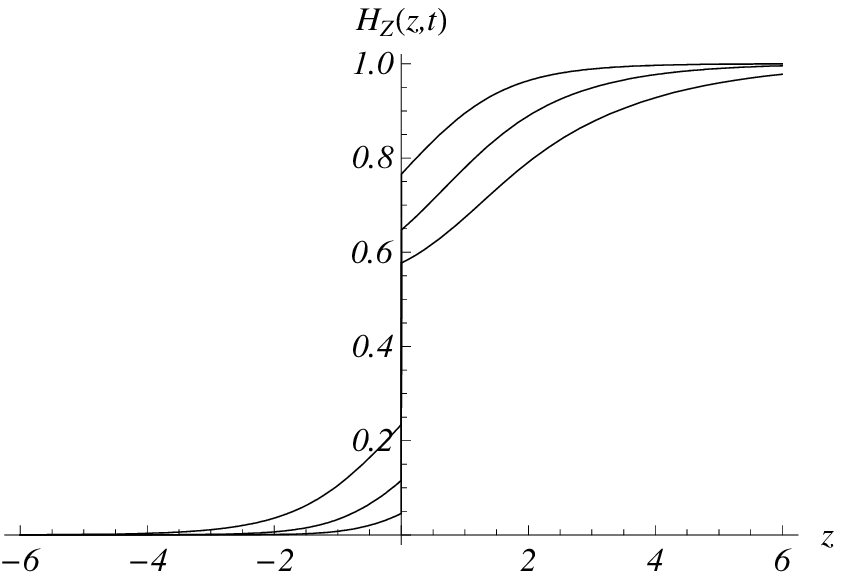}
$\;$
\epsfxsize=7.5cm
\epsfbox{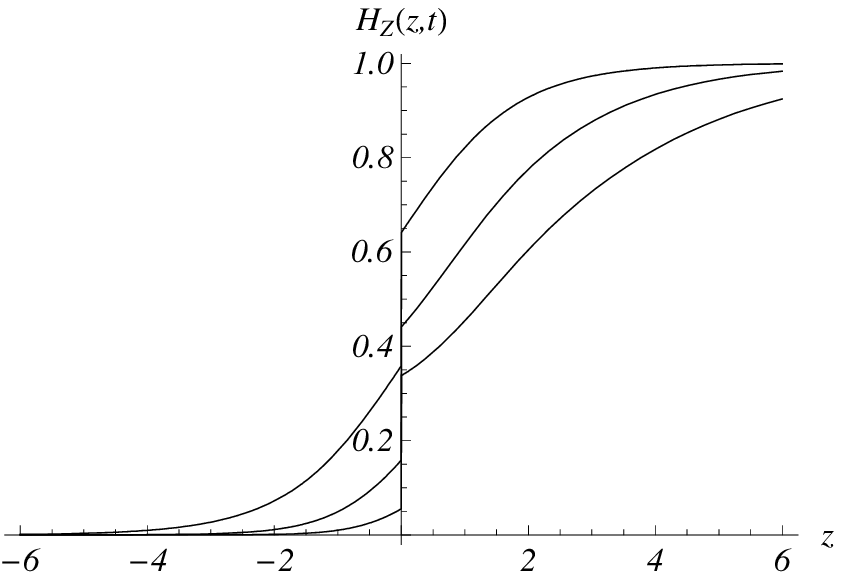}
}
\caption{\small Distribution function (\ref{eq:hznormal}) for $\lambda=1$, $\mu=1$,  $\sigma=1$ and 
$\eta=0, 0.5, 1$ (from top to bottom), with (a) $t=1$, and (b) $t=2$.
}
\label{fig:3}
\end{figure}
\section{The iterated Poisson process}\label{section:iPp}
In this section we suppose that in Eq.\ (\ref{eq:defZt}) the RVs 
$W_n$, $n=1,2,\ldots$, have a Poisson distribution with parameter $\mu$. 
This special case corresponds to the assumption that $X_i=1$, $i\geq 1$ a.s.\ so that  
$$
F_X(x)={\bf 1}_{\{x\geq 1\}}, 
 \qquad 
 F^{(n)}_X(x)={\bf 1}_{\{x\geq n\}}, \quad n=1,2,\ldots,
$$
and, thus, $Y(t)=M(t)$, $t\geq 0$, a.s. 
Hence, in this case $\{Z(t),t\geq 0\}$ is a Markovian jump process over the set of nonnegative 
integers, with right-continuous nondecreasing sample-paths. This is called an `iterated Poisson 
process' with parameters $(\mu,\lambda)$, since it can be expressed as a Poisson process 
$M(t)$ with intensity $\mu$ whose randomized time is an independent Poisson process $N(t)$ 
with intensity $\lambda$, i.e.\ $Z(t)=M[N(t)]$. We point out that various results on this process 
have been obtained in \cite{OrPo2010} and \cite{OrPo2012}. 
\par
In analogy with (\ref{eq:denshZt}), the probability function of the iterated Poisson process 
for $t>0$ is (cf.\ [12, Equation  (8)]) 
\begin{equation}
 p_n(t)=\Prob\{Z(t)=n\}
 =\exp\left\{-\lambda t \left(1-{\rm e}^{-\mu}\right)\right\}\,{\mu^{n}\over n!}\,
 B_n\left(\lambda t {\rm e}^{-\mu}\right), 
 \qquad n=0,1,\ldots.
 \label{equation:pnt}
\end{equation}
Note that $ \sum_{n=0}^{+\infty}p_n(t)=1$ for all $t>0$. We recall that $B_0(x)=1$, 
and that $B_n(x)$ can be expressed in closed form by means of the Dobi\'nski formula 
(\ref{eq:dobinski}). Some plots of $p_n(t)$ are shown in Figure \ref{fig:probt}, for $t=1,2,3$. 
\begin{figure}[t]
%
\centerline{
\epsfxsize=7.5cm
\epsfbox{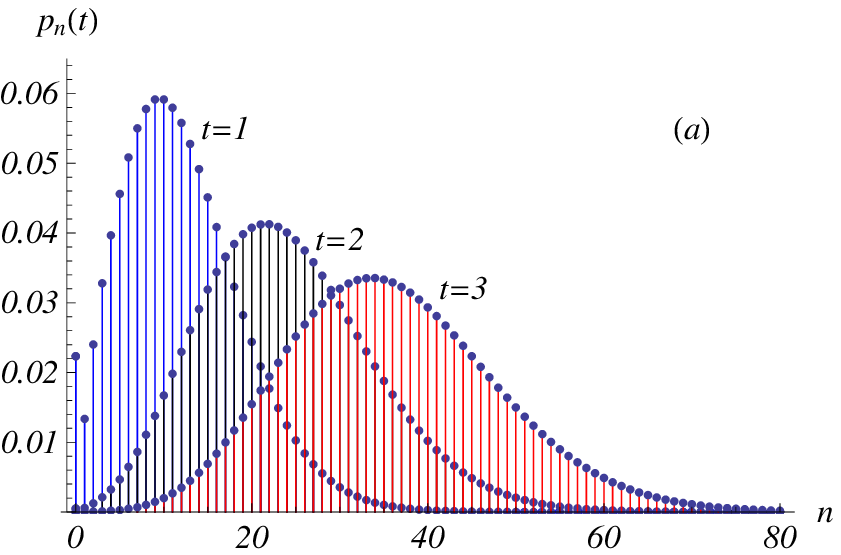}
$\;$
\epsfxsize=7.5cm
\epsfbox{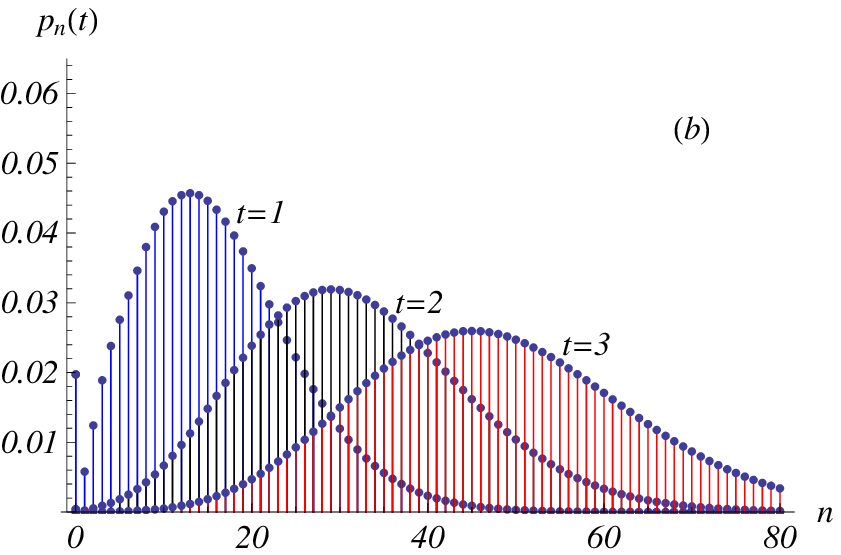}
}
\caption{\small Probability distribution (\ref{equation:pnt}) for $\lambda=4$ and 
for (a) $\mu=3$ and (b) $\mu=4$.}
\label{fig:probt}
\end{figure}
\par
As a special case of (\ref{eq:esprHzt}) we have the discrete CDF of $Z(t)$, $t\geq 0$:  
\begin{equation}
 P_n(t)=H_Z(n;t) 
 =\exp\left\{-\lambda t \left(1-{\rm e}^{-\mu}\right)\right\}
 \sum_{j=0}^n {\mu^{j}\over j!}\,
 B_j\left(\lambda t {\rm e}^{-\mu}\right), 
 \qquad n=0,1,\ldots.
 \label{equation:Pntiniziale}
\end{equation}
Making use of Eqs.\ (\ref{equation:pnt}) and (\ref{eq:dobinski}) the following alternative 
form of $P_n(t)$, $t\geq 0$, can be obtained:   
\begin{equation}
 P_n(t)=\exp\left\{-\lambda\left(1-{\rm e}^{-\mu}\right)t\right\}\,
 \bigg[1+{\bf 1}_{\{n\geq 1\}} \sum_{k=0}^n \left(\lambda{\rm e}^{-\mu}t\right)^k
 \sum_{j=k}^n S_2(j,k)\, {\mu^{j}\over j!}\bigg], \qquad n=0,1,\ldots.
  \label{equation:Pnt}
\end{equation}
\par
The L\'evy exponent of the iterated Poisson process is 
\begin{equation}
 \Psi(\theta)=\lambda\left[1- \exp\{-\mu(1-{\rm e}^{-\theta})\right].
  \label{eq:Psitheta}
\end{equation}
Note that if $\mu\to 0$ and $\lambda\to+\infty$ such that $\lambda\mu\to \xi$ with $0<\xi<+\infty$  
then the right-hand-side of (\ref{eq:Psitheta}) tends to $\xi(1-{\rm e}^{-\theta})$, which is the 
L\'evy exponent of the Poisson process with intensity $\xi$. 
\begin{remark}{\rm 
(i) \ Due to the following well-known recurrence formula of Bell polynomials,
\begin{equation}
 B_n(x)=x\sum_{k=1}^{n} {n-1\choose k-1} B_{k-1}(x), 
 \qquad n=1,2,\ldots,
 \label{eq:recBn}
\end{equation}
the probability distribution of  the iterated Poisson process satisfies the recurrence relation
$$
 p_n(t)={1\over n}\, \lambda {\rm e}^{-\mu} t 
 \sum_{k=1}^n {\mu^{n-k+1}\over (n-k)!}\,p_{k-1}(t),  
 \qquad n=1,2,\ldots, \;\; t>0.
$$
(ii) \  The following conditional probability holds for $0<s<t$ and $k=0,1,\ldots,n$:
\begin{equation}
 \Prob[Z(s)=k\,|\,Z(t)=n]   
 =  {n\choose k}{B_k\left(\lambda{\rm e}^{-\mu}s\right)\,
 B_{n-k}\left(\lambda{\rm e}^{-\mu}(t-s)\right) 
 \over B_n\left(\lambda{\rm e}^{-\mu}t\right)}.
 \label{eq:probcond}
\end{equation}
If $t\to +\infty$ and $s\to +\infty$ with $s/t\to\theta\in(0,1)$, 
then the right-hand-side of (\ref{eq:probcond}) tends to a binomial distributions with 
parameters $n$ and $\theta$, since $B_n(x)\sim x^n$ for $x$ large. 
}\end{remark}
\par
Recalling (see \cite{OrPo2012}) that the mean and the variance of $Z(t)$ are given 
by $\E\{Z(t)\} =  \lambda\mu t$ and ${\rm var}[Z(t)] = \lambda\mu (1+\mu) t$, $t\geq 0$, respectively, we have 
that the iterated Poisson process is overdispersed, since its dispersion index  
$$
 \frac{\mathbb {\rm var}[Z(t)]}{\E\{Z(t)\}}=1+\mu
$$  
is larger than 1 and, in particular, is independent of time $t$. 
\par
Finally, let us denote by $S_n$ the sojourn time of the iterated Poisson process $Z(t)$ in state $n$. 
Since $Z(0)=0$ a.s.\ and  
$$
 \lim_{h\to 0^+}{1\over h}\,\Prob[Z(t+h)\neq k\,|\,Z(t)=k] 
 = \lambda\left(1-{\rm e}^{-\mu}\right),
 \qquad k=0,1,\ldots,
$$
then $S_0$ is exponentially distributed with parameter $\lambda\left(1-{\rm e}^{-\mu}\right)$.  
Moreover, $Z(t)$ is a transient Markov process over the non-negative integers, so   its mean sojourn time 
in state $n$ is finite. Indeed, due to  (\ref{equation:pnt}), 
$$
 \E\{S_n\}= \int_0^{+\infty} p_n(t) \,{\rm d}t
 = \displaystyle{1\over \lambda}\,{\mu^{n}\over n!}\,
\sum_{k=1}^{+\infty} k^n  {\rm e}^{-\mu k}<+\infty.
$$
We conclude this section with an example of application of the iterated Poisson process to queueing theory, 
which also highlights a context in which the results given in the next section are relevant. 
\begin{application}\label{app:1}{ 
Consider a $M/D/1$ queueing system having arrival rate $\lambda$ and  constant unity 
service time. Assume that the service station is inactive during the time interval $[0,t]$, 
so that all waiting customers arriving in this period wait in a queueing room. At time $t$ the bulk 
service of all customers begins, and, thus, ends after a time period whose length is equal 
to the number of customers arrived in $[0,t]$. Assume that during such service time other 
customers join the system with rate $\mu$. Thus, the iterated Poisson process $Z(t)$ describes 
the number of customers that join the system during the service period of those arrived in $[0,t]$. 
In this context, the upward first-crossing time through a constant boundary is the first time that 
the number of customers $Z(t)$ crosses a fixed threshold, such as a waiting-room capacity. 
}\end{application}
%
\section{Stopping time problems for an iterated Poisson process}
Stimulated by the potential applications, in this section we 
study some stopping time problems for the iterated Poisson process. 
\par
The problem of determining the first-passage time distribution for time-changed Poisson processes 
through a constant boundary has been considered recently in \cite{OrTo2013}  with detailed attention 
given to the cases of the fractional Poisson process, the relativistic Poisson process and the gamma 
Poisson process. Moreover, some results on the distributions of stopping times of CPPs in the presence 
of linear boundaries are given in \cite{DiCrMart09}, \cite{StZa03},  \cite{Za91}, and \cite{Za05}.
\par
In this section we confront the first-crossing time problem through various types of boundaries for the iterated 
Poisson process $Z(t)$ investigated in the previous section. 
\par
Let $\beta_k(t)$ be a continuous function such that $\beta_k(t)\geq 0$ for all 
$t\geq 0$, and $\beta_k(0)=k$, with $k\in\mathbb N^+$. The first-crossing time 
of $Z(t)$, defined as in (\ref{eq:defZt}), through the boundary $\beta_k(t)$ will be denoted as
\begin{equation}
 T=\inf\{t>0: Z(t)\geq \beta_k(t)\}. 
 \label{eq:defTk}
\end{equation}
Since $Z(0)=0<\beta_k(0)$, the  first crossing occurs from below. 
\begin{proposition}\label{prop:betadecr}
Let $Z(t)$ be the iterated Poisson process. If $\beta_k(t)$ is nonincreasing in $t$   
then for all $t\geq 0$
\begin{equation}
 \Prob[T>t]=P_{\lfloor \beta_k(t)^- \rfloor}(t),
 \label{eq:PTmt}
\end{equation}
where $\lfloor x^-\rfloor$ is the largest integer smaller than $x$, and $P_n(t)$ has been 
given in (\ref{equation:Pnt}). 
\end{proposition}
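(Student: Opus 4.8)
The plan is to reduce the two-dimensional first-crossing event to a one-dimensional statement about the marginal $Z(t)$, by exploiting the opposite monotonicities of the process and the boundary. The key observation is that the sample paths of $Z$ are nondecreasing (it is a compound Poisson process with nonnegative integer-valued jumps), whereas $\beta_k$ is nonincreasing; hence along each path the function $g(s):=Z(s)-\beta_k(s)$ is nondecreasing in $s$. This monotonicity says that once $Z$ reaches the boundary it can never return below it, so that not crossing on $(0,t]$ is equivalent to lying below the boundary at the single instant $t$.

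First I would make the monotonicity precise: since $Z$ is right-continuous and $\beta_k$ is continuous, $g$ is right-continuous and nondecreasing on $(0,t]$, so $\sup_{0<s\le t} g(s)=g(t)$ holds pathwise. Now $\{T>t\}$ is by definition of the infimum in (\ref{eq:defTk}) the event $\{Z(s)<\beta_k(s)\ \text{for all }s\in(0,t]\}$, i.e.\ $\{\sup_{0<s\le t} g(s)<0\}$, which by the displayed identity equals $\{g(t)<0\}=\{Z(t)<\beta_k(t)\}$. Therefore $\Prob[T>t]=\Prob[Z(t)<\beta_k(t)]$.

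It then remains to rewrite the strict inequality in terms of the discrete CDF. Because $Z(t)$ takes values in the nonnegative integers, the event $\{Z(t)<\beta_k(t)\}$ coincides with $\{Z(t)\le\lfloor\beta_k(t)^-\rfloor\}$, where $\lfloor\beta_k(t)^-\rfloor$ is by definition the largest integer strictly below $\beta_k(t)$; taking probabilities and recalling $P_n(t)=H_Z(n;t)=\Prob[Z(t)\le n]$ from (\ref{equation:Pntiniziale}) yields (\ref{eq:PTmt}). I do not expect any substantive obstacle, since everything rests on the single fact that $Z-\beta_k$ is nondecreasing; the only delicate bookkeeping, which I would verify explicitly, is the behaviour at the endpoint $s=t$ (handled by right-continuity of $g$) and the case in which $\beta_k(t)$ happens to be an integer, where the strict inequality forces the floor down to $\beta_k(t)-1$. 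The notation $\lfloor x^-\rfloor$ is tailored precisely to absorb this latter case uniformly.
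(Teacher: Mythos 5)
Your proposal is correct and follows the same route as the paper, which argues in one line that the nondecreasing sample paths of $Z(t)$ together with the nonincreasing boundary give $\Prob\{T>t\}=\Prob\{Z(t)<\beta_k(t)\}$, and then reads off the right-hand side as $P_{\lfloor \beta_k(t)^- \rfloor}(t)$ since $Z(t)$ is integer-valued. You merely make explicit the pathwise monotonicity of $Z(s)-\beta_k(s)$, the right-continuity at the endpoint, and the integer-boundary bookkeeping that the paper leaves implicit.
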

\begin{proof}
The proof follows immediately, since the sample paths of $Z(t)$ are nondecreasing, 
and, thus, $\Prob\{T>t\}=\Prob\{Z(t)<\beta_k(t)\}$ for all $t\geq 0$.  
\end{proof}
\par
%
\begin{figure}[t]
\centerline{
\epsfxsize=7.5cm
\epsfbox{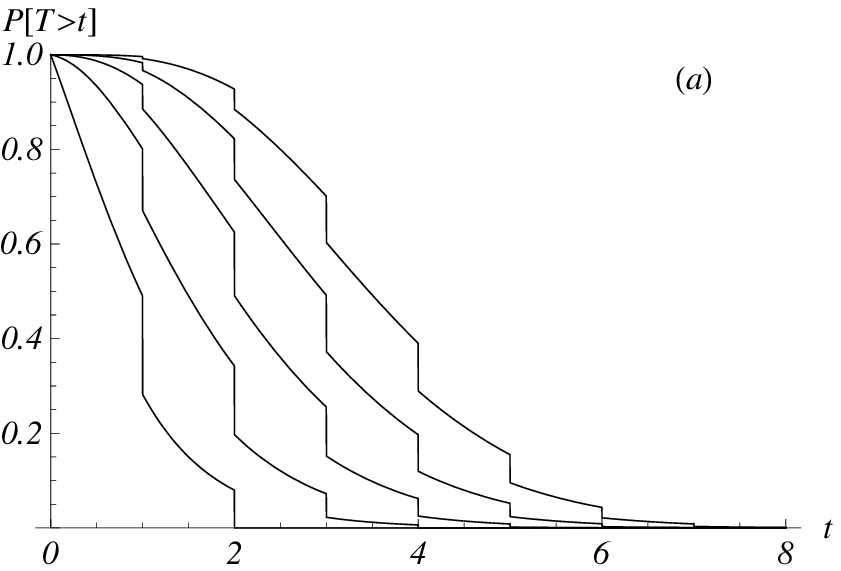}
$\quad$
\epsfxsize=7.5cm
\epsfbox{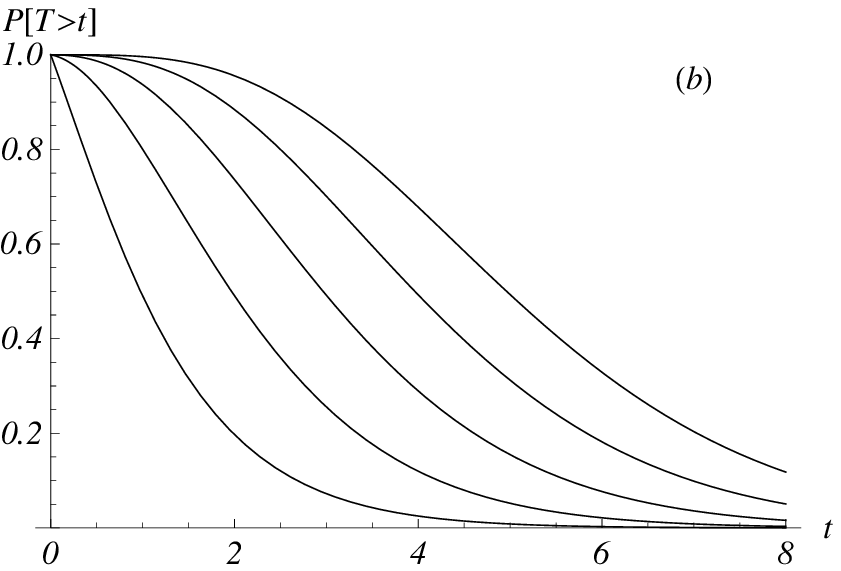}
}
\caption{\small Plots of (\ref{eq:PTmt}) for $k=2$, $4$, $6$, $8$, $10$ (from bottom to top), with  
$\mu=1$, $\lambda=2$, and boundary ({\em a}) $\beta_k(t)=k-t$, and ({\em b}) $\beta_k(t)=k$.}
\label{fig:2casi}
\end{figure}
%
\par
Some plots of the survival function of $T$ are given in Figure \ref{fig:2casi} for a linear decreasing 
boundary and for a constant boundary. 
\subsection{Constant boundary}
We now obtain the probability law of (\ref{eq:defTk}) when the boundary is constant. 
In this case the first-crossing time density of $T$ will be denoted by $\psi(t)$. 
\begin{proposition}\label{prop:fptconst}
Let $Z(t)$ be the iterated Poisson process. If $\beta_k(t)=k$ for all $t\geq 0$ then 
for $k=1$ the first-crossing time density is exponential with parameter 
$\lambda(1-{\rm e}^{-\mu})$, whereas for $k=2,3,\ldots$ 
\begin{align}
 \psi(t)= & \; p_0(t)\,\lambda(1-{\rm e}^{-\mu})
 \bigg[1+  \sum_{i=0}^{k-1} \left(\lambda{\rm e}^{-\mu}t\right)^i C(i;k-1)\bigg] 
 \nonumber \\
 &-p_0(t)\, \lambda {\rm e}^{-\mu}
 \sum_{i=1}^{k-1}i  \left(\lambda{\rm e}^{-\mu}t\right)^{i-1} C(i;k-1)
\label{eq:psibcost}
\end{align}
where 
$$
 C(i;k-1):=\sum_{j=i}^{k-1}S_2(j,i)\,\frac{\mu^j}{j!}.
$$
\end{proposition}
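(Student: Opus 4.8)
The plan is to identify $\psi(t)$ as minus the time derivative of the survival function, which Proposition~\ref{prop:betadecr} already supplies in closed form. A constant boundary $\beta_k(t)=k$ is (weakly) nonincreasing, so Proposition~\ref{prop:betadecr} applies and gives $\Prob[T>t]=P_{\lfloor k^-\rfloor}(t)=P_{k-1}(t)$, since the largest integer strictly below the integer $k$ is $k-1$. Because $P_{k-1}(t)$ is an exponential times a polynomial in $t$, it is smooth, so $T$ is absolutely continuous with density $\psi(t)=-\frac{d}{dt}P_{k-1}(t)$; the whole problem thus reduces to differentiating the explicit expression for $P_{k-1}(t)$.

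First I would settle the case $k=1$. Here $\Prob[T>t]=P_0(t)={\rm e}^{-\lambda t(1-{\rm e}^{-\mu})}=p_0(t)$, whence $\psi(t)=\lambda(1-{\rm e}^{-\mu})\,{\rm e}^{-\lambda t(1-{\rm e}^{-\mu})}$, the claimed exponential density. This is consistent with the observation in Section~\ref{section:iPp} that the sojourn time $S_0$ in state $0$ is exponential with parameter $\lambda(1-{\rm e}^{-\mu})$ and that $T=S_0$ when the threshold equals $1$.

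For $k\geq 2$ I would start from the explicit form (\ref{equation:Pnt}) with $n=k-1\geq 1$, writing $P_{k-1}(t)={\rm e}^{-\lambda t(1-{\rm e}^{-\mu})}R(t)$ with $R(t)=1+\sum_{i=0}^{k-1}(\lambda{\rm e}^{-\mu}t)^i\,C(i;k-1)$, and apply the product rule. Differentiating the exponential factor reproduces $R(t)$ scaled by $\lambda(1-{\rm e}^{-\mu})$, which, upon recognizing ${\rm e}^{-\lambda t(1-{\rm e}^{-\mu})}=p_0(t)$, is exactly the first line of (\ref{eq:psibcost}). Differentiating the polynomial factor uses $\frac{d}{dt}(\lambda{\rm e}^{-\mu}t)^i=i\,\lambda{\rm e}^{-\mu}(\lambda{\rm e}^{-\mu}t)^{i-1}$; the $i=0$ summand is constant and drops out, so the surviving sum runs from $i=1$ to $k-1$ and, after factoring out $p_0(t)$ and $\lambda{\rm e}^{-\mu}$, yields precisely the second line of (\ref{eq:psibcost}).

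The only delicate point is the index bookkeeping in the derivative of $R(t)$: one must note that the $i=0$ term has vanishing derivative, so the second sum legitimately begins at $i=1$, and one must keep the factor ${\rm e}^{-\lambda t(1-{\rm e}^{-\mu})}$ identified with $p_0(t)$ throughout. No convergence questions intervene, since both sums are finite. I expect this constant-term and index accounting to be the main---though minor---obstacle; the remainder is a routine product-rule differentiation of the closed form (\ref{equation:Pnt}).
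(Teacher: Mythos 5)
Your proposal is correct and follows essentially the same route as the paper: invoke Proposition~\ref{prop:betadecr} (with $\lfloor k^-\rfloor = k-1$) to get $\Prob\{T>t\}=P_{k-1}(t)$, then set $\psi(t)=-({\rm d}/{\rm d}t)P_{k-1}(t)$ and differentiate the closed form (\ref{equation:Pnt}). The paper's proof states this in three lines and leaves the product-rule computation implicit; your write-up merely makes explicit the same differentiation, including the $k=1$ case and the vanishing $i=0$ term, with no substantive difference in method.
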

\begin{proof}
From Proposition \ref{prop:betadecr}, which includes the case of constant boundaries, we have 
\begin{equation}
 \Prob\{T>t\}=P_{k-1}(t) \qquad \hbox{for all }t\geq 0.
\label{eq:Survbcost}
\end{equation}
Due to (\ref{eq:Survbcost}) the first-crossing time density through state $k$ can be expressed as 
$\psi(t)=-({\rm d}/{\rm d}t)\Prob\{T>t\}=-({\rm d}/{\rm d}t)P_{k-1}(t)$. 
Hence, differentiating (\ref{equation:Pnt}) with respect to $t$ yields (\ref{eq:psibcost}).
\end{proof}
\par 
We remark that under the assumptions of Proposition \ref{prop:fptconst}, due to (\ref{equation:Pntiniziale}) 
and (\ref{eq:Survbcost}),  $T$ is an honest RV. Let us then evaluate the mean of $T$. 
\begin{corollary}
Let $Z(t)$ be the iterated Poisson process. If $\beta_k(t)=k$ for all $t\geq 0$ then   
\begin{equation}
 \E(T)=\frac{1}{\lambda\left(1-{\rm e}^{-\mu}\right)}\,
 \bigg[1+{\bf 1}_{\{n\geq 1\}} \sum_{k=0}^n \frac{k!}{({\rm e}^{\mu}-1)^k}
 \sum_{j=k}^n S_2(j,k)\, {\mu^{j}\over j!}\bigg],
 \qquad   k\in\mathbb{N}^+.
 \label{eq:MeanTk}
\end{equation}
\end{corollary}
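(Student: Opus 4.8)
The plan is to compute $\E(T)$ through the tail-integral representation of the mean of a nonnegative random variable, namely $\E(T)=\int_0^{+\infty}\Prob\{T>t\}\,{\rm d}t$. This representation is legitimate here because $T$ was just shown to be an honest RV (via (\ref{equation:Pntiniziale}) and (\ref{eq:Survbcost})), so its survival function decays to $0$ and the integral converges. By (\ref{eq:Survbcost}) the integrand is exactly $P_{k-1}(t)$, and I would substitute the closed form (\ref{equation:Pnt}) evaluated at $n=k-1$. It is worth flagging a notational overlap in the statement: the symbol $n$ appearing in (\ref{eq:MeanTk}) is precisely this $k-1$ (one less than the boundary level), whereas the summation letter $k$ inside (\ref{eq:MeanTk}) is a bound dummy index running over $0,\dots,n$, not the boundary level itself.

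After inserting (\ref{equation:Pnt}), the integral separates into a constant contribution and a finite double sum, each summand of which consists of the common exponential factor $\exp\{-\lambda(1-{\rm e}^{-\mu})t\}$ multiplied by a monomial $(\lambda{\rm e}^{-\mu}t)^i$. Since the inner double sum is finite, interchanging summation and integration is immediate. Setting $a:=\lambda(1-{\rm e}^{-\mu})$, the constant term integrates to $1/a$, while each monomial term reduces to the elementary gamma integral $\int_0^{+\infty}t^i{\rm e}^{-at}\,{\rm d}t=i!/a^{i+1}$. Thus every term in the double sum acquires a factor $i!$ and a power $a^{-(i+1)}$, and factoring the leading $1/a$ out front recovers the prefactor $1/[\lambda(1-{\rm e}^{-\mu})]$ in (\ref{eq:MeanTk}).

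The single algebraic step requiring attention is the simplification of the resulting coefficient $(\lambda{\rm e}^{-\mu})^i/a^i$. Using $a=\lambda(1-{\rm e}^{-\mu})$, one finds $(\lambda{\rm e}^{-\mu})^i/a^i=\big({\rm e}^{-\mu}/(1-{\rm e}^{-\mu})\big)^i=1/({\rm e}^{\mu}-1)^i$, which is exactly the factor $k!/({\rm e}^{\mu}-1)^k$ (with the $i!$ from the gamma integral) appearing inside (\ref{eq:MeanTk}). Collecting these pieces yields the claimed expression, and the indicator $\mathbf{1}_{\{n\geq 1\}}$ correctly encodes the degenerate case $k=1$, where only the constant term survives and $\E(T)=1/[\lambda(1-{\rm e}^{-\mu})]$, consistent with the exponential first-crossing density found in Proposition \ref{prop:fptconst}. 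I do not anticipate any genuine obstacle: the computation is a routine term-by-term integration against gamma integrals, and the only real pitfall is the bookkeeping induced by the clash between the boundary level and the dummy summation index.
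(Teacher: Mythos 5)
Your proposal is correct and takes essentially the same route as the paper's own proof: both compute $\E(T)=\int_0^{+\infty}\Prob\{T>t\}\,{\rm d}t$ with $\Prob\{T>t\}=P_{k-1}(t)$ from (\ref{eq:Survbcost}), substitute the closed form (\ref{equation:Pnt}), and integrate term by term using $\int_0^{+\infty}t^{i}{\rm e}^{-at}\,{\rm d}t=i!/a^{i+1}$ with $a=\lambda(1-{\rm e}^{-\mu})$, so that $(\lambda{\rm e}^{-\mu})^{i}/a^{i}=({\rm e}^{\mu}-1)^{-i}$. You merely make explicit the ``after some calculations'' the paper omits, including the correct reading that the $n$ in (\ref{eq:MeanTk}) stands for $k-1$ while the inner $k$ is a dummy summation index.
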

\begin{proof}
Since $\E(T)=\int_0^{+\infty}\Prob[T>t]\,{\rm d}t$, making use of (\ref{equation:Pnt}) 
and (\ref{eq:Survbcost}) after some calculations we obtain (\ref{eq:MeanTk}). 
\end{proof}
\par
Recall that the iterated Poisson process, $Z(t)$, is a Markov process over the non-negative integers. 
We denote by 
$$
 H=\inf\{t>0: Z(t)=k\}, \qquad k\in\mathbb{N}^+,
$$ 
the first-hitting time of state $k$ for $Z(t)$. Let $h(t)$ denote the density of $H$.
\begin{proposition}
Let $Z(t)$ be the iterated Poisson process. 
The density of the first-hitting time $H$ for $t>0$ is given by 
\begin{equation}
 h(t)={{\rm e}^{-\mu} \mu^k\over k!}\,\lambda\,
 \exp\left\{-\lambda\left(1-{\rm e}^{-\mu}\right)t\right\}\,
 B_k'\left(\lambda{\rm e}^{-\mu}t\right), \qquad k=1,2,\ldots,
 \label{eq:densHk}
\end{equation}
where $B'_k(x)$ is defined in (\ref{bellderivata}). 
\end{proposition}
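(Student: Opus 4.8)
The plan is to exploit the pure-jump structure of the iterated Poisson process: by (\ref{eq:defZt}) it is a nondecreasing Markov process on $\{0,1,2,\dots\}$ whose jumps are triggered by the arrivals of $N(t)$ (at rate $\lambda$) and whose jump sizes $W_n$ are i.i.d.\ Poisson$(\mu)$ variables. The crucial observation is that, because the increments can exceed $1$, reaching state $k$ \emph{exactly} is not the same as crossing it: the process hits $k$ precisely when it sits at some level $j<k$ and the next arrival of $N$ carries an increment of size exactly $k-j$. Moreover, since the sample paths are nondecreasing, the event $\{Z(t^-)=j\}$ with $j<k$ already guarantees that $k$ has not been attained before $t$. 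This lets me localise the event $\{H\in(t,t+\mathrm{d}t]\}$ at the fatal jump, without a separate ``no earlier hit'' condition.

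Accordingly, first I would condition on the state immediately before the hitting jump. The probability that $N$ fires in $(t,t+\mathrm{d}t]$ is $\lambda\,\mathrm{d}t$, and, independently, the attached increment equals $k-j$ with probability $p(k-j;\mu)$ as in (\ref{eq:defpMt}). Since for a pure-jump process $\Prob\{Z(t^-)=j\}=\Prob\{Z(t)=j\}=p_j(t)$ (a fixed time is a.s.\ not a jump time), summing over the admissible pre-jump levels $j=0,1,\dots,k-1$ gives
$$
  h(t)=\lambda\sum_{j=0}^{k-1}p_j(t)\,p(k-j;\mu).
$$
Substituting $p_j(t)$ from (\ref{equation:pnt}) and $p(k-j;\mu)=\mathrm{e}^{-\mu}\mu^{k-j}/(k-j)!$, and factoring out the common exponential together with $\mu^{k}$, reduces the right-hand side (after writing $1/(j!\,(k-j)!)=\binom{k}{j}/k!$) to
$$
  \frac{\mathrm{e}^{-\mu}\mu^{k}}{k!}\,\lambda\,\exp\{-\lambda(1-\mathrm{e}^{-\mu})t\}\sum_{j=0}^{k-1}\binom{k}{j}B_j(\lambda\mathrm{e}^{-\mu}t).
$$

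The remaining, and main, step is the combinatorial identity $B_k'(x)=\sum_{j=0}^{k-1}\binom{k}{j}B_j(x)$, which converts the finite Bell-polynomial sum into the single derivative appearing in (\ref{eq:densHk}). I would establish it directly from the relations already recorded in the paper: reindex the recurrence (\ref{eq:recBn}) as $B_{k+1}(x)=x\sum_{j=0}^{k}\binom{k}{j}B_j(x)$, and combine it with the derivative formula (\ref{bellderivata}) in the form $B_k'(x)=B_{k+1}(x)/x-B_k(x)$; the $j=k$ term of the sum is $B_k(x)$, which cancels, leaving exactly $\sum_{j=0}^{k-1}\binom{k}{j}B_j(x)$. (The same identity also drops out in one line from the exponential generating function $\sum_n B_n(x)t^n/n!=\mathrm{e}^{x(\mathrm{e}^{t}-1)}$ by differentiating in $x$ and matching coefficients of $t^n$.) Inserting this with $x=\lambda\mathrm{e}^{-\mu}t$ into the display yields (\ref{eq:densHk}).

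I expect the only genuine subtlety to be conceptual rather than computational: one must resist treating $H$ like the first-crossing time of the preceding subsection. Because jumps of size $\ge 2$ can overshoot $k$, the process may never equal $k$, so $H$ is a defective random variable and $\int_0^{\infty}h(t)\,\mathrm{d}t<1$; the derivation above is consistent with this, as it counts only those paths whose fatal jump lands exactly on $k$. Checking that no ``earlier hit'' contribution has been dropped — which is precisely what monotonicity of the paths secures — is the point deserving the most care.
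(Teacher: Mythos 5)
Your proof is correct and takes essentially the same route as the paper: the paper's proof likewise writes $h(t)\,{\rm d}t=\sum_{j=0}^{k-1}\Prob\{Z(t)=j\}\,\Prob\{Z({\rm d}t)=k-j\}$, substitutes (\ref{equation:pnt}) to obtain the sum $\sum_{j=0}^{k-1}{k\choose j}B_j\left(\lambda{\rm e}^{-\mu}t\right)$, and collapses it to $B_k'\left(\lambda{\rm e}^{-\mu}t\right)$ via the recurrence (\ref{eq:recBn}) combined with (\ref{bellderivata}), exactly as you do. The details you make explicit --- that monotonicity of the paths lets the pre-jump state $j<k$ subsume the ``no earlier hit'' condition, and that $H$ is defective (consistent with (\ref{eq:pik})) --- are correct points the paper leaves implicit.
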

\begin{proof}
For any $t>0$ and $k=1,2,\ldots$ we have 
$$
 h(t)\,{\rm d}t
 =\sum_{j=0}^{k-1}\Prob\{Z(t)=j\}\,\Prob\{Z({\rm d}t)=k-j\}.
$$
Making use of (\ref{equation:pnt}), and by setting $i=j+1$ we obtain 
$$
 h(t) 
 ={{\rm e}^{-\mu} \mu^k\over k!}\,\lambda\,
 \exp\left\{-\lambda\left(1-{\rm e}^{-\mu}\right)t\right\}\,
 \sum_{i=1}^{k} {k\choose i-1}B_{i-1}\left(\lambda{\rm e}^{-\mu}t\right).
$$
The density (\ref{eq:densHk}) then follows by recalling the recurrence equation (\ref{eq:recBn}). 
\end{proof}
\par
Some plots of the first-hitting-time density (\ref{eq:densHk}) are shown in Figure 5.
%
\begin{figure}[t]
\centerline{
\epsfxsize=8cm
\epsfbox{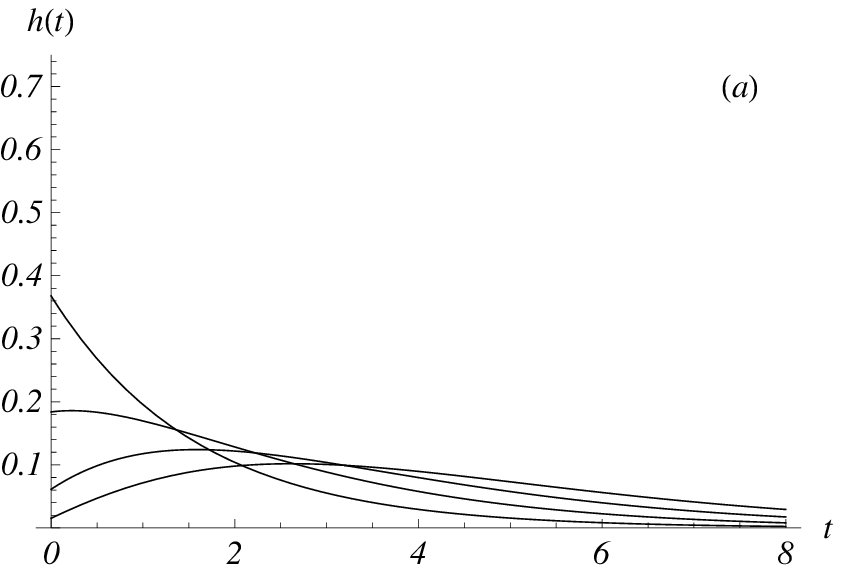}
$\;$
\epsfxsize=8cm
\epsfbox{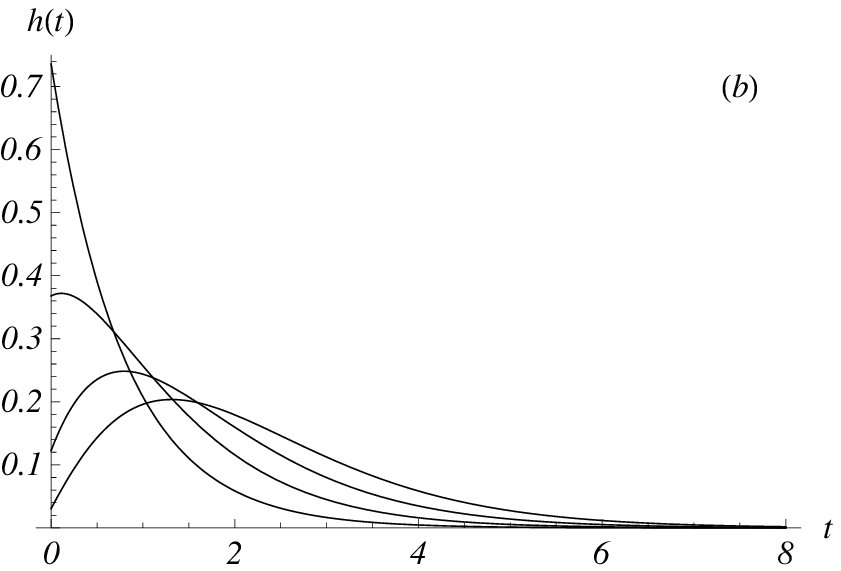}
}
\caption{\small First-hitting time density (\ref{eq:densHk}) with $\mu=1$ and 
(a) $\lambda=1$, and (b) $\lambda=2$, for $k=1,2,3,4$ (from top to bottom near the origin).}
\end{figure}
%
\par
In the following proposition we evaluate the distribution function of the first-hitting time, 
and express it in terms of the lower incomplete gamma function 
$\gamma(a,z)=\int_0^{z}t^{a-1} {\rm e}^{-t}\,{\rm d}t$. 
\begin{proposition}
Let $Z(t)$ be the iterated Poisson process. 
The distribution function of $H$ for $k=1,2,\ldots$ and $t\geq 0$ is given by 
\begin{equation}
 F_{H}(t)={\mu^k\over k!}\Bigg[\exp\left\{-\lambda\big(1-{\rm e}^{-\mu}\big)t\right\}
 B_k\big(\lambda {\rm e}^{-\mu} t \big)
 +\sum_{j=0}^k S_2(k,j)\,{\gamma\left(j+1,\lambda (1-{\rm e}^{-\mu})t\right)
 \over ({\rm e}^{\mu}-1)^j}\Bigg].
 \label{eq:FHk}
\end{equation}
\end{proposition}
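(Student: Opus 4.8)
The plan is to obtain $F_H$ by integrating the first-hitting-time density (\ref{eq:densHk}), writing $F_H(t)=\int_0^t h(u)\,{\rm d}u$. To keep the algebra transparent I would abbreviate $a=\lambda(1-{\rm e}^{-\mu})$ and $c=\lambda{\rm e}^{-\mu}$, so that
$$
 F_H(t)=\frac{{\rm e}^{-\mu}\mu^k}{k!}\,\lambda\int_0^t {\rm e}^{-au}\,B_k'(cu)\,{\rm d}u .
$$

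First I would integrate by parts, differentiating ${\rm e}^{-au}$ and integrating $B_k'(cu)$, whose antiderivative is $\frac{1}{c}B_k(cu)$. The boundary term at $u=0$ vanishes because $B_k(0)=0$ for every $k\geq 1$ (recall $B_k(x)=x+\cdots+x^k$), so only the endpoint $u=t$ contributes; since $\lambda{\rm e}^{-\mu}/c=1$, it produces exactly $\frac{\mu^k}{k!}\,{\rm e}^{-at}B_k(ct)$, which is the first summand of (\ref{eq:FHk}). What remains is the residual integral $\frac{{\rm e}^{-\mu}\mu^k}{k!}\,\lambda\,\frac{a}{c}\int_0^t {\rm e}^{-au}B_k(cu)\,{\rm d}u$.

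For this residual the key idea is to expand $B_k$ by the Dobi\'nski formula (\ref{eq:dobinski}), $B_k(cu)=\sum_{j=0}^k S_2(k,j)\,(cu)^j$, and to integrate each monomial against ${\rm e}^{-au}$. The substitution $s=au$ turns $\int_0^t {\rm e}^{-au}u^j\,{\rm d}u$ into $a^{-(j+1)}\gamma(j+1,at)$, producing exactly the lower incomplete gamma function of the statement (with second argument $at=\lambda(1-{\rm e}^{-\mu})t$). Collecting the $j$th term then leaves the prefactor $\frac{{\rm e}^{-\mu}\mu^k}{k!}\,\lambda\,\frac{a}{c}\,S_2(k,j)\,c^j a^{-(j+1)}$, and I would simplify it using $c=\lambda{\rm e}^{-\mu}$ and $a=\lambda(1-{\rm e}^{-\mu})$: the powers of $\lambda$ cancel completely, and the surviving ${\rm e}^{-\mu j}/(1-{\rm e}^{-\mu})^j$ collapses to $({\rm e}^{\mu}-1)^{-j}$, giving $\frac{\mu^k}{k!}\,S_2(k,j)\,\gamma(j+1,at)/({\rm e}^{\mu}-1)^j$. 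Summing over $j$ reproduces the second bracketed sum of (\ref{eq:FHk}).

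The only delicate point is the constant bookkeeping in that last step---tracking the competing powers of $\lambda$, ${\rm e}^{-\mu}$, $a$ and $c$ so that everything collapses to the clean ratio $1/({\rm e}^{\mu}-1)^j$---but this is purely mechanical. An alternative, essentially equivalent, route would be to verify (\ref{eq:FHk}) directly: one checks $F_H(0)=0$ (immediate, since $\gamma(j+1,0)=0$ and $B_k(0)=0$) and then differentiates, using $\frac{{\rm d}}{{\rm d}t}\gamma(j+1,at)=a\,(at)^j{\rm e}^{-at}$ together with (\ref{eq:dobinski}) to reassemble $B_k$ and $B_k'$ and recover (\ref{eq:densHk}). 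I would favour the integration-by-parts derivation, since it constructs the answer rather than merely confirming it.
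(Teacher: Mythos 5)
Your proposal is correct and takes essentially the same approach as the paper: integrate the density (\ref{eq:densHk}), integrate by parts with the boundary term giving the $B_k$ summand, and expand via the Dobi\'nski formula (\ref{eq:dobinski}) to produce the incomplete gamma terms. The only cosmetic difference is that the paper first substitutes $x=\lambda{\rm e}^{-\mu}\tau$ and then integrates by parts in $x$, whereas you integrate by parts in the original time variable and substitute only inside each monomial integral; the constant bookkeeping you flag works out exactly as you describe, collapsing to $({\rm e}^{\mu}-1)^{-j}$.
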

\begin{proof}
Making use of Eq.\ (\ref{eq:densHk})  and setting 
$\lambda\,{\rm e}^{\mu}\tau=x$ we have 
$$
 F_{H}(t)=\int_0^t h(\tau)\, {\rm d}\tau
 = {\mu^k\over k!}\int_0^{\lambda {\rm e}^{-\mu} t}
 \exp\left\{-({\rm e}^{\mu}-1)x\right\} B'_k(x)\,{\rm d}x.
$$
Integrating by parts and recalling the Dobi\'nski formula 
(\ref{eq:dobinski}) after some calculations, we finally obtain  (\ref{eq:FHk}). 
\end{proof}
\par
From (\ref{eq:FHk}) it is not hard to see that the first-hitting probability is  
\begin{equation}
 \pi_k:=\Prob(H<\infty)={\mu^k\over k!}\,\sum_{j=1}^k
 S_2(k,j)\,{j!\over ({\rm e}^{\mu}-1)^j}, \qquad k=1,2,\ldots.
 \label{eq:pik}
\end{equation}
It is interesting to note that this probability does not depend on $\lambda$. 
Plots of  $\pi_k=\pi_k(\mu)$ are given in Figure 6 for some choices of $k$. 
Note that the first-hitting probability (\ref{eq:pik}) is in general less than unity.
%
\begin{figure}[t]
\centerline{
\epsfxsize=8cm
\epsfbox{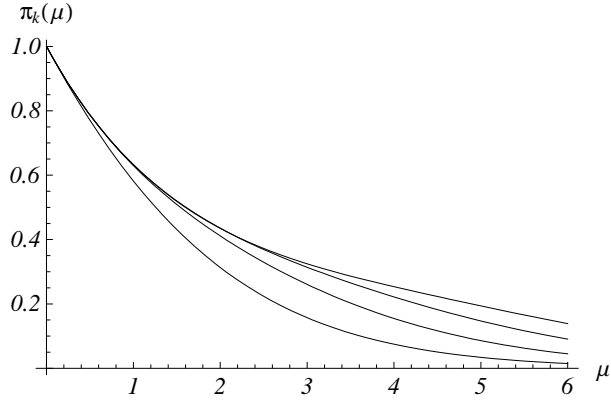}
}
\caption{\small First hitting probability (\ref{eq:pik}) as a function of $\mu$ for 
$k=1,2,3,4$ (from bottom to top).}
\end{figure}
%
\subsection{Linear increasing boundary}
Let us now consider the first-crossing time $T$ of $Z(t)$ through a linear boundary with unity 
slope, $\beta_k(t)=k+t$, where $k$ is a fixed nonnegative integer. For all nonnegative integers 
$j$ we define the avoiding probability 
\begin{equation}
 g(j;t)=\Prob\{Z(t)=j, T>t\}, \qquad t\geq 0. 
 \label{eq:defgk}
\end{equation}
Clearly, we have $g(j;t)=0$ for all $j\geq k+t$. Moreover, for all $t\geq 0$ we have 
$$
 g(j;t)=p_j(t) \qquad \hbox{for }0\leq j\leq k. 
$$
The case $j>k$ will be investigated below. 
\par
We first give a recursive procedure able to evaluate probabilities 
(\ref{eq:defgk}) for integer values of $t$. 
\begin{proposition}\label{proposition:iterg}
Let $Z(t)$ be the iterated Poisson process, and let $\beta_k(t)=k+t$. 
The following steps provide an iterative procedure to evaluate $g(j;n)$, with $n$ integer: 
\par
$\bullet$ for $n=0$: \quad $g(0;0)=1$; 
\par
$\bullet$ for $n=1$: \quad $g(j;1)=p_j(1)$ for $0\leq j\leq k$; 
\par
$\bullet$ for $n=2,3,\ldots$:  
$$
 g(j;n)=\left\{ 
 \begin{array}{ll}
 \displaystyle\sum_{i=0}^j g(i;n-1)\,p_{j-i}(1), & 0\leq j\leq k+n-2 \\[0.5cm]
 \displaystyle\sum_{i=0}^{k+n-2} g(i;n-1)\,p_{k+n-1-i}(1), & j=k+n-1.
 \end{array}
 \right.
$$
\end{proposition}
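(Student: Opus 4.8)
The plan is to combine two structural facts about the iterated Poisson process $Z(t)$: its sample paths are nondecreasing and integer-valued, and it is a L\'evy process (as noted in Section 3), so the increment of $Z$ over the unit interval $[n-1,n]$ is independent of $\{Z(s):s\le n-1\}$ and is distributed as $Z(1)$, whose probability mass function is $p_\cdot(1)$. Writing $g(j;n)=\Prob\{Z(n)=j,\,T>n\}$, the whole argument rests on reducing the continuous-time crossing condition $T>n$ to a constraint on $Z$ at the integer instant $n$ alone.

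First I would establish the reduction lemma: on the event $\{T>n-1\}$ one has $T>n$ if and only if $Z(n)\le k+n-1$. The forward direction is immediate, since $T>n$ forces $Z(n)<\beta_k(n)=k+n$. For the converse, assume $T>n-1$ and $Z(n)\le k+n-1$; by monotonicity $Z(s)\le Z(n)\le k+n-1$ for every $s\in(n-1,n]$, and for such $s$ one has $\beta_k(s)=k+s>k+n-1\ge Z(s)$ when $s<n$, while $Z(n)\le k+n-1<k+n=\beta_k(n)$ at $s=n$; thus no crossing occurs on $(n-1,n]$, and none occurs on $[0,n-1]$ because $T>n-1$, so $T>n$. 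I expect this lemma to be the crux of the proof: it is precisely the unit slope of $\beta_k$, matched to the integer scale of the jumps of $Z$, that makes avoiding the level $k+n$ at the endpoint $n$ sufficient to clear the (noninteger) boundary values throughout the open interval.

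Granting the lemma, for $0\le j\le k+n-1$ the inclusion $\{Z(n)=j\}\subseteq\{Z(n)\le k+n-1\}$ lets me replace $\{Z(n)=j,\,T>n\}$ by $\{Z(n)=j,\,T>n-1\}$. Decomposing over the value of $Z(n-1)$ and using that the increment $Z(n)-Z(n-1)$ is independent of the $\sigma$-field generated by $\{Z(s):s\le n-1\}$ (on which both $\{T>n-1\}$ and $\{Z(n-1)=i\}$ are measurable), I obtain
$$
 g(j;n)=\sum_{i=0}^{j}\Prob\{Z(n-1)=i,\,T>n-1\}\,\Prob\{Z(n)-Z(n-1)=j-i\}=\sum_{i=0}^{j}g(i;n-1)\,p_{j-i}(1),
$$
where $i$ runs only up to $j$ because $Z$ is nondecreasing.

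It remains to match this single identity with the two displayed cases and to dispatch the initialization. For $0\le j\le k+n-2$ the recursion is exactly as stated. For $j=k+n-1$ the term $i=k+n-1$ vanishes, since $g(k+n-1;n-1)=0$ (because $k+n-1\ge k+(n-1)=\beta_k(n-1)$), so the upper limit drops to $k+n-2$, which is the recorded form. The base cases follow from the identity $g(j;t)=p_j(t)$ valid for $0\le j\le k$: taking $t=0,\ j=0$ gives $g(0;0)=p_0(0)=1$, and taking $t=1$ gives $g(j;1)=p_j(1)$ for $0\le j\le k$. This completes the scheme.
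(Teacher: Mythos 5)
Your proof is correct and follows essentially the same route as the paper's: the convolution identity $p_n(t)=\sum_{j=0}^n p_j(s)\,p_{n-j}(t-s)$ together with the independent, stationary increments and nondecreasing paths of $Z(t)$, decomposing over the value of $Z(n-1)$. The paper's own proof is a one-liner that leaves the crux to ``the properties of $Z(t)$''; your reduction lemma (given $T>n-1$, one has $T>n$ if and only if $Z(n)\le k+n-1$, because the unit slope of $\beta_k$ matches the integer jumps of $Z$) is precisely the step the paper leaves implicit, correctly spelled out, including the vanishing term $g(k+n-1;n-1)=0$ in the boundary case.
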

\begin{proof}
We recall that $p_n(t) = \sum_{j=0}^n p_j(s)\,p_{n-j}(t-s)$, for $0<s<t$. 
Hence, the proof follows by the definitions of the involved quantities and the properties of $Z(t)$. 
\end{proof}
\par
Let us now determine the survival function of $T$ under the assumptions of 
Proposition \ref{proposition:iterg}. 
\begin{proposition}\label{prop:survlincr}
Let $Z(t)$ be the iterated Poisson process, and let $\beta_k(t)=k+t$. 
For any $t> 0$ the survival function of $T$ is given by 
\begin{equation}
 \Prob\{T>t\}=\left\{ 
 \begin{array}{ll}
 \displaystyle\sum_{i=0}^{k+n-1} g(i;n), & t=n \\[0.5cm]
 \displaystyle\sum_{i=0}^{k+n} g(i;t)
 =\sum_{i=0}^{k+n} \sum_{m=0}^i g(m;n)\,p_{i-m}(t-n), & n < t< n+1,
 \end{array}
 \right.
 \label{eq:survblineare}
\end{equation}
where $n$ denotes a nonnegative integer, and in the last sum we set $g(k+n;n)\equiv 0$. 
\end{proposition}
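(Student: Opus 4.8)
The plan is to exploit the decomposition of the event $\{T>t\}$ into the disjoint events $\{Z(t)=i,\,T>t\}$, whose probabilities are exactly the avoiding probabilities $g(i;t)$ introduced in (\ref{eq:defgk}). The first step is to observe that, since the sample paths of $Z(t)$ are nondecreasing and integer-valued, the survival event $\{T>t\}$ requires $Z(s)<\beta_k(s)=k+s$ for every $s\le t$. At time $t$ the constraint reads $Z(t)<k+t$, so the largest state compatible with survival is $\lfloor (k+t)^-\rfloor$. This yields the basic identity
$$
 \Prob\{T>t\}=\sum_{i=0}^{\lfloor (k+t)^-\rfloor} g(i;t),
$$
and the two cases in (\ref{eq:survblineare}) are simply this identity with the summation limit made explicit: for the integer time $t=n$ the boundary value $k+n$ is an integer and the admissible states are $0,1,\dots,k+n-1$, while for $n<t<n+1$ the value $k+t$ is strictly between $k+n$ and $k+n+1$, so the admissible states are $0,1,\dots,k+n$.

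The second step handles the integer-time case $t=n$: here the upper summation index $k+n-1$ is exactly the range over which $g(i;n)$ is computed by the iterative procedure of Proposition \ref{proposition:iterg}, so the first branch of (\ref{eq:survblineare}) follows directly from the displayed identity above together with the definition of $g$. The third step treats a non-integer time $t$ with $n<t<n+1$. On the interval $(n,n+1)$ the boundary $\beta_k$ increases by strictly less than one unit above its integer value, so no new integer level of the boundary is crossed between $n$ and $t$; consequently a path survives up to $t$ if and only if it has survived up to $n$ (landing in some admissible state $m\le k+n-1$) and then makes an increment over $(n,t)$ that keeps it below $k+t$, i.e.\ leaves it in a state $i\le k+n$. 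Using the stationary-independent-increment structure of the L\'evy process $Z(t)$, the increment over $(n,t)$ is independent of $\{Z(s),\,s\le n\}$ and is distributed as $Z(t-n)$, whence
$$
 g(i;t)=\sum_{m=0}^{i} g(m;n)\,p_{i-m}(t-n),
$$
and summing over $i$ from $0$ to $k+n$ gives the second branch, with the convention $g(k+n;n)\equiv 0$ accounting for the fact that state $k+n$ is inadmissible at the integer time $n$.

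The main obstacle, and the point requiring genuine care, is the justification of the convolution step: one must argue that over the partial interval $(n,t)$ the boundary excludes no intermediate integer state, so that surviving to $t$ factorizes cleanly into surviving to $n$ and then not overshooting $k+t$, rather than requiring the path to dodge a moving barrier on $(n,t)$. This hinges on the unit slope of $\beta_k$ together with the fact that $Z$ moves only through integer states: because $k+s<k+n+1$ for all $s\in(n,t)$, any path already at a state $\le k+n-1$ at time $n$ can cross the boundary in $(n,t)$ only by reaching state $k+n$, which is precisely the event excluded by capping the final state at $k+n$. Once this is established, the stationarity and independence of increments of the L\'evy process $Z(t)$ supply the Chapman--Kolmogorov convolution $p_n(t)=\sum_{j=0}^n p_j(s)\,p_{n-j}(t-s)$ recalled in the proof of Proposition \ref{proposition:iterg}, applied now to the avoiding probabilities, and the proof is complete.
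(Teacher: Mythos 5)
Your proposal is correct and follows essentially the same route as the paper: the paper's own proof consists of the identity $\Prob\{T>t\}=\sum_{j<k+t}g(j;t)$ followed by an appeal to Proposition \ref{proposition:iterg} and the Markov property of $Z(t)$, and your two branches are exactly this identity with the summation limit made explicit at $t=n$ and $t\in(n,n+1)$, plus the Chapman--Kolmogorov convolution applied to the avoiding probabilities. Your added value is spelling out the point the paper leaves implicit: on $(n,t]$ the boundary stays in the open interval $(k+n,\,k+n+1)$, so by monotonicity of the integer-valued paths, survival on $(n,t]$ is equivalent to the single condition $Z(t)\le k+n$, which is why the increment over $(n,t]$ enters through a plain convolution with no barrier-avoidance constraint. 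However, your final paragraph contains a local slip that, taken literally, contradicts your own (correct) formula: you write that a surviving path ``can cross the boundary in $(n,t)$ only by reaching state $k+n$, which is precisely the event excluded by capping the final state at $k+n$.'' Since $\beta_k(s)=k+s>k+n$ for all $s>n$, reaching state $k+n$ during $(n,t]$ is \emph{not} a crossing --- indeed the sum in (\ref{eq:survblineare}) legitimately includes the final state $i=k+n$. A crossing in $(n,t]$ requires reaching state $k+n+1$ or beyond, and by monotonicity this forces $Z(t)\ge k+n+1$, which is exactly what the cap $i\le k+n$ excludes. With ``$k+n$'' corrected to ``$k+n+1$'' (equivalently, ``strictly exceeding $k+n$'') in that sentence, your justification of the key factorization is complete and the proof stands.
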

\begin{proof}
From (\ref{eq:defgk}) we have 
$$
 \Prob\{T>t\}=\sum_{j<k+t}g(j;t).
$$
Hence, the proof follows from Proposition \ref{proposition:iterg} and the Markov 
property of $Z(t)$. 
\end{proof}
\par
In conclusion, in Figure 7 we show some plots of the survival function of $T$ obtained by use of 
Proposition \ref{prop:survlincr}.
%
\begin{figure}[t]
\centerline{
\epsfxsize=8cm
\epsfbox{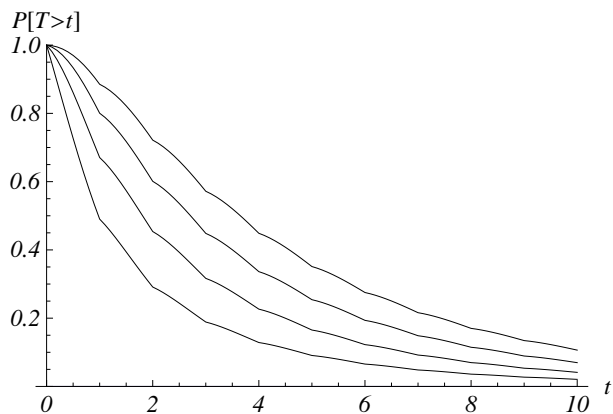}
}
\caption{\small Plot of (\ref{eq:survblineare}) for $\mu=1$ and $\lambda=2$, 
with $k=1,2,3,4$ (from bottom to top), when $\beta_k(t)=k+t$.}
\end{figure}
%
\section*{Acknowledgements} 
The research of A.\ Di Crescenzo and B.\ Martinucci has been performed under partial 
support by Regione Campania (Legge 5) and GNCS-INdAM. S.\ Zacks is grateful to the Ph.D.\ Program 
in Mathematics of Salerno University for his partial support during a visit in July 2010. 

%
\end{document}